\documentclass{amsart}

\usepackage[latin1]{inputenc}
\usepackage[T1]{fontenc}
\usepackage{verbatim}
\usepackage{multirow}

\usepackage{geometry}
\usepackage{amssymb}
\usepackage{amsmath}
\usepackage{graphicx}
\usepackage{amsthm}
\usepackage{caption}

\usepackage{color}

\usepackage{enumerate}

%\setlength{\parindent}{0mm}
%\setlength{\parskip}{2mm }

%{\operatorname{d}}
%{\Warning}%{\blacktriangle}

%\newcommand{\sol}{u: V \times \nnegR \to \posR}

\newcommand{\eChar}{\begin{enumerate}[(i)]}
\newcommand{\eCharR}{\begin{enumerate}[(a)]}
\newcommand{\eBr}{\begin{enumerate}[(1)]}

%{\widetilde{z_i}}%{Z_i}%

\newcommand{\Abstract}

%\keywords{Bakry-\'Emery curvature, Curvature-Dimension Inequality, Cartesian products, Cayley graphs, Strongly regular graphs}
%\subjclass{Primary 05C50, Secondary 52C99, 53A40}

\title
{
Lin-Lu-Yau curvature and diameter of amply regular graphs
}

\author{Xintian Li}
\address{
School of Mathematical Sciences\\
University of Science and Technology of China\\
96 Jinzhai Road\\
Hefei 230026\\
Anhui Province\\
China}
\email{lxt024@mail.ustc.edu.cn}

\author{Shiping Liu}
\address{
School of Mathematical Sciences\\
University of Science and Technology of China\\
96 Jinzhai Road\\
Hefei 230026\\
Anhui Province\\
China}
\email{spliu@ustc.edu.cn}
%
%\author{Norbert Peyerimhoff}
%\address{
%Department of Mathematical Sciences\\
%Durham University\\
%South Road\\
%Durham, DH1 3LE\\
%United Kingdom}
%\email{norbert.peyerimhoff@durham.ac.uk}

%\affil{Department of Mathematical Sciences, \\Durham University, DH1 3LE Durham, \\United Kingdom}

\date{\today}

\theoremstyle{plain}
\newtheorem{lemma}{Lemma}[section]
\newtheorem{theorem}[lemma]{Theorem}

\newtheorem{corollary}[lemma]{Corollary}

\theoremstyle{definition}

\newtheorem{definition}[lemma]{Definition}
\newtheorem{remark}[lemma]{Remark}

\numberwithin{equation}{section}

\begin{comment}
\numberwithin{subsection}{section}

\numberwithin{theorem}{section}
\numberwithin{lemma}{section}
\numberwithin{proposition}{section}
\numberwithin{example}{section}
\numberwithin{no}{section}
\numberwithin{rem}{section}
\numberwithin{defn}{section}
\numberwithin{corollary}{section}
\end{comment}

%-----------------------------------------------------------
\begin{document}

\pagestyle{plain}

\begin{abstract}
 By Hall's marriage theorem, we study lower bounds of the Lin-Lu-Yau curvature of amply regular graphs with girth $3$ or $4$ under different parameter restrictions. As a consequence, we show that each conference graph has positive Lin-Lu-Yau curvature. 
Our approach also provides a geometric proof of a classical diameter estimate for amply regular graphs in the case of girth $4$ and some special cases of girth $3$.
\end{abstract}
 \keywords{Amply regular graph; perfect matching; Wasserstein distance; Lin-Lu-Yau curvature}
\maketitle
%%%%%%%%%%%%%%%%%%%%%%%%%%%%%%%%%%%%%%%%%%%%%%%%%%%%%
\section{Introduction and statements of result}
%%%%%%%%%%%%%%%%%%%%%%%%%%%%%%%%%%%%%%%%%%%%%%%%%%%%%
Ricci curvature is a fundamental concept in Riemannian geometry. Its extension to general metric measure spaces, particularly, to locally finite graphs, has attracted lots of attention \cite{CY96, Ollivier09, Ollivier10, BJL12, JL14, BM15, LLY11, LY10}. In 2009, Ollivier \cite{Ollivier09,Ollivier10} introduced the notion of coarse Ricci curvature of Markov chains on metric spaces including graphs. On graphs, Ollivier's Ricci curvature $\kappa_p$ of an edge is defined via the Wasserstein distance between two probability measure around the two vertices of the edge, depending on an idleness parameter $p\in [0,1]$. In 2011, Lin, Lu, and Yau \cite{LLY11} modified Ollivier's notion by taking the minus of the derivative of $\kappa_p$ at $p=1$. We will study this modified Ricci curvature, which will be refered to as the \emph{Lin-Lu-Yau curvature}, on amply regular graphs in this paper.

Let $G=(V,E)$ be a locally finite connected simple graph. Recall that the girth of $G$ is the length of its shortest cycle. We denote by $d(x,y)$ the length of the shortest path connecting the two vertices $x$ and $y$. We call $\mu: V\to [0,1]$ a probability measure on the graph $G=(V,E)$ if $\sum_{v\in V}\mu(v)=1$.
\begin{definition}
Let $G=(V,E)$ be a locally finite graph, $\mu_1$ and $\mu_2$ be two probability measures on $G$. The Wasserstein distance $W_1(\mu_1, \mu_2)$ between $\mu_1$ and $\mu_2$ is defined as
\[W_1(\mu_1,\mu_2)=\inf_{\pi}\sum_{y\in V}\sum_{x\in V}d(x,y)\pi(x,y),\]
where the infimum is taken over all maps $\pi: V\times V\to [0,1]$ satisfying
\[\mu_1(x)=\sum_{y\in V}\pi(x,y),\,\,\mu_2(y)=\sum_{x\in V}\pi(x,y).\] Such a map is called a transport plan.
\end{definition}
We consider the following particular measure around a vertex $x\in V$:
\[\mu_x^p(y)=\left\{
               \begin{array}{ll}
                 p, & \hbox{if $y=x$;} \\
                 \frac{1-p}{\mathrm{deg}(x)}, & \hbox{if $y\sim x$;} \\
                 0, & \hbox{otherwise,}
               \end{array}
             \right.
\]
where $\mathrm{deg}(x):=\sum_{y\in V: y\sim x} 1$ is the vertex degree of $x$.

\begin{definition}[\cite{Ollivier09,LLY11}] Let $G=(V,E)$ be a locally finite graph. For any vertices $x,y\in V$, the $p$-Ollivier-Ricci curvature $\kappa_p(x,y)$, $p\in [0,1]$, is defined as
\[\kappa_p(x,y)=1-\frac{W_1(\mu_x^p,\mu_y^p)}{d(x,y)}.\]
The Lin-Lu-Yau curvature $\kappa(x,y)$ is defined as
\[\kappa(x,y)=\lim_{p\to 1}\frac{\kappa_p(x,y)}{1-p}.\]
\end{definition}
Notice that $\kappa_1(x,y)$ is always $0$. Hence, Lin-Lu-Yau curvature $\kappa(x,y)$ is minus the derivative of $\kappa_p(x,y)$ at $p=1$.

Bourne et al. \cite{BCLMP18} studied the relation between $p$-Ollivier-Ricci curvature and Lin-Lu-Yau curvature. In particular, they proved for an edge $xy\in E$ with $\mathrm{deg}(x)=\mathrm{deg}(y)=d$ that
\begin{equation}\label{eq:idle}
 \kappa(x,y)=\frac{d+1}{d}\kappa_{\frac{1}{d+1}}(x,y).
\end{equation}

The Lin-Lu-Yau curvature has been computed or estimated on graphs with further regularity assumptions. For regular graphs (i.e., every vertex has the same degree), the following upper bound estimate is known.
\begin{theorem}[see \cite{CKKLMP20}]\label{Thm:upperT}
Let $G=(V,E)$ be a $d$-regular graph. For any edge $xy\in E$, we have
\[\kappa(x,y)\leq \frac{2+|\Delta_{xy}|}{d},\]
where $\Delta_{xy}:=\Gamma(x)\cap \Gamma(y)$, $\Gamma(x):=\{z\in V| xz\in E\}$, and $\Gamma(y):=\{z\in V| yz\in E\}$.
\end{theorem}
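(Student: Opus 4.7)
The plan is to convert the claimed upper bound on curvature into a lower bound on the Wasserstein distance and then produce that lower bound via Kantorovich-Rubinstein duality applied to a carefully chosen $1$-Lipschitz test function. Since $\mathrm{deg}(x)=\mathrm{deg}(y)=d$ and $d(x,y)=1$, identity \eqref{eq:idle} lets me replace $\kappa(x,y)$ by $\frac{d+1}{d}\bigl(1-W_1(\mu_x^{p},\mu_y^{p})\bigr)$ with $p:=1/(d+1)$. The asserted inequality is therefore equivalent to
\[
W_1(\mu_x^{p},\mu_y^{p}) \ge \frac{d-1-|\Delta_{xy}|}{d+1}.
\]

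A convenient feature of the choice $p=1/(d+1)$ is that both $\mu_x^{p}$ and $\mu_y^{p}$ reduce to the uniform measure of weight $1/(d+1)$ on the closed $1$-neighborhoods $A:=\{x\}\cup\Gamma(x)$ and $B:=\{y\}\cup\Gamma(y)$ respectively. I would test duality against $f(z):=d(z,A)$, which is automatically $1$-Lipschitz and vanishes identically on $\supp\mu_x^{p}=A$. Kantorovich-Rubinstein duality then gives
\[
W_1(\mu_x^{p},\mu_y^{p}) \;\ge\; \sum_{z\in V} f(z)\bigl(\mu_y^{p}(z)-\mu_x^{p}(z)\bigr) \;=\; \frac{1}{d+1}\sum_{z\in B} f(z).
\]

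All that remains is to evaluate the finite sum. On $B$, the function $f$ vanishes at $y\in A$ and at each $z\in\Gamma(y)\cap A=\{x\}\cup\Delta_{xy}$. For any other $z\in\Gamma(y)\setminus(\{x\}\cup\Delta_{xy})$, one has $d(z,A)\le d(z,y)=1$, while $z\neq x$ and $z\not\sim x$ force $d(z,A)\ge 1$, so $f(z)=1$. Counting yields exactly $d-1-|\Delta_{xy}|$ such vertices, which gives the required lower bound on $W_1$ whenever $|\Delta_{xy}|\le d-1$. In the remaining regime $|\Delta_{xy}|\ge d-1$ the right-hand side of the theorem is at least $(d+1)/d$, a bound that already follows trivially from $W_1\ge 0$, so no further argument is needed. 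The one step requiring any care — the identification $f(z)=1$ outside the obvious zero set — is completely routine; the proof is essentially a one-line duality estimate once the reduction via \eqref{eq:idle} is in place, which explains why such a clean upper bound holds in the regular setting.
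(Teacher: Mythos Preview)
Your argument is correct. The reduction via \eqref{eq:idle} to a lower bound on $W_1(\mu_x^{1/(d+1)},\mu_y^{1/(d+1)})$ is accurate, the test function $f(z)=d(z,A)$ with $A=\{x\}\cup\Gamma(x)$ is $1$-Lipschitz, and your evaluation of $\sum_{z\in B}f(z)=d-1-|\Delta_{xy}|$ is right: the only vertices of $B=\{y\}\cup\Gamma(y)$ outside $A$ are precisely those in $\Gamma(y)\setminus(\{x\}\cup\Delta_{xy})$, each at distance exactly $1$ from $A$ via the edge to $y$. Kantorovich--Rubinstein duality then gives the desired bound. (Your separate treatment of $|\Delta_{xy}|\ge d-1$ is harmless but unnecessary, since in a simple graph $\Delta_{xy}\subset\Gamma(x)\setminus\{y\}$ forces $|\Delta_{xy}|\le d-1$, and in the extremal case the main estimate already yields the trivial $W_1\ge 0$.)

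As for comparison: the paper does not supply its own proof of this statement; Theorem~\ref{Thm:upperT} is quoted from \cite{CKKLMP20} and used only as a black box to turn the lower bound in the proof of Theorem~\ref{Thm:main}(i) into an equality. Your duality argument is the standard route to such upper bounds and is entirely self-contained, so nothing is lost by including it.
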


Bonini et al. \cite{BCDDFP20} derived Lin-Lu-Yau curvature formulas for strongly regular graphs in terms of the graph parameters and the size of a maximal matching in the so-called core neighborhood. In fact, a more general curvature formula for regular graphs has been shown in \cite[Theorem 2.6]{MW19}. In particular, their result leads to exact formula for the Lin-Lu-Yau curvature for strongly regular graphs with girth $4$ and $5$. For the case of girth $3$, no exact formula for the Lin-Lu-Yau curvature purely in terms of graph parameter exists: The $4\times 4$ Rook's graph and Shrikhande graph are both strongly regular with the parameter $(16,6,2,2)$; Bonini et al. \cite{BCDDFP20} computed their Lin-Lu-Yau curvature to be $\kappa=\frac{2}{3}$ and $\kappa=\frac{1}{3}$ respectively.

We study the Lin-Lu-Yau curvature of \emph{amply regular} graphs with girth $3$ or $4$ in this paper.
\begin{definition}[Amply regular graph \cite{BCN89}] We call a $d$-regular graph with $n$ vertices an amply regular graph with parameter $(n,d,\alpha,\beta)$ if the following holds true:
\begin{itemize}
  \item [(i)] Any two adjacent vertices have $\alpha$ common neighbors;
  \item [(ii)] Any two vertices with distance $2$ have $\beta$ common neighbors.
\end{itemize}
\end{definition}
We remark that if the above property (ii) holds for any two non-adjacent vertices, the amply regular graph is strongly regular. Therefore, amply regularity is a relaxation of the strongly regularity.

For amply regular graphs with girth $4$ we have the following Lin-Lu-Yau curvature formula.

\begin{theorem}
 Let $G=(V,E)$ be an amply regular graph with parameter $(n,d,\alpha,\beta)$ with girth $4$. For any $xy\in E$, we have
\[\kappa(x,y)=\frac{2}{d}.\]
\end{theorem}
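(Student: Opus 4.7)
I would begin with the upper bound $\kappa(x,y) \le 2/d$, which drops out of Theorem~\ref{Thm:upperT}: since $G$ has girth $4$, two adjacent vertices share no common neighbor, so $\Delta_{xy} = \emptyset$ and Theorem~\ref{Thm:upperT} immediately gives $\kappa(x,y) \le 2/d$. The real content of the theorem is the matching lower bound $\kappa(x,y) \ge 2/d$, and the plan below is aimed entirely at that.

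My first move is to convert the curvature bound into a Wasserstein inequality by means of the identity~\eqref{eq:idle}. Since both endpoints have degree $d$, the inequality $\kappa(x,y) \ge 2/d$ is equivalent to $\kappa_{1/(d+1)}(x,y) \ge 2/(d+1)$, and hence to
\[
W_1\bigl(\mu_x^{1/(d+1)}, \mu_y^{1/(d+1)}\bigr) \le \frac{d-1}{d+1}.
\]
I pick the idleness $p = 1/(d+1)$ precisely because $\mu_x^p$ then becomes uniform on $\{x\} \cup \Gamma(x)$ with weight $1/(d+1)$ at every point (and likewise $\mu_y^p$ on $\{y\}\cup\Gamma(y)$); this is the form best suited to a combinatorial transport plan.

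The plan I would build leaves the mass $1/(d+1)$ at $x$ fixed and the mass $1/(d+1)$ at $y$ fixed (both vertices carry the same weight in source and target), and transports the remaining mass from $A := \Gamma(x) \setminus \{y\}$ onto $B := \Gamma(y) \setminus \{x\}$ along a perfect matching $M$ of $G$-edges between the two sets. To produce $M$, consider the bipartite graph $H$ on parts $(A,B)$ whose edges are those edges of $G$ joining the two parts. For any $u \in A$, the girth-$4$ hypothesis forbids the triangle $xuy$, so $u \not\sim y$ and $d(u,y) = 2$; amply regularity then gives $|\Gamma(u) \cap \Gamma(y)| = \beta$, and since exactly one of those common neighbors is $x$, $u$ has $\beta-1$ neighbors inside $B$. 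Symmetrically each $v \in B$ has $\beta-1$ neighbors in $A$, so $H$ is $(\beta-1)$-regular on both sides. A $4$-cycle of $G$, which exists by the girth hypothesis, exhibits a distance-$2$ pair with two common neighbors and forces $\beta \ge 2$, so $H$ is a nonempty regular bipartite graph and Hall's marriage theorem supplies the desired perfect matching $M$.

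With $M$ in hand the transport plan $\pi(x,x) = \pi(y,y) = 1/(d+1)$ and $\pi(u,v) = 1/(d+1)$ for each $(u,v) \in M$ has the correct marginals by direct verification, and its total cost is $(d-1)\cdot\frac{1}{d+1}$, which delivers the Wasserstein bound and hence the lower bound on $\kappa(x,y)$. The only non-cosmetic step is the existence of $M$; but once one extracts from girth $4$ that $H$ is a nonempty $(\beta-1)$-regular bipartite graph, Hall's theorem closes the argument with no further input.
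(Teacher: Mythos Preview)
Your argument is correct and matches the paper's approach. The paper does not spell out a proof for the girth-$4$ theorem---it simply observes that the proofs in \cite{BCDDFP20} and \cite{CKKLP20} carry over to the amply regular setting---but your Hall's-theorem-plus-transport-plan argument is exactly the template the paper itself uses in its proof of Theorem~\ref{Thm:main}(i) (with $\alpha=0$ in place of $\alpha=1$), including the observation that $\beta\ge 2$ so that the auxiliary bipartite graph is $(\beta-1)$-regular and Hall's condition is automatic.
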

This formula has been established for the particular cases of strongly regular graphs with girth $4$ and distance regular graphs with girth $4$ in \cite{BCDDFP20} and \cite{CKKLP20} respectively. Observe that the Lin-Lu-Yau curvature of a given edge only involves number of common neighbors of vertices with distance at most $2$. Therefore, the proofs of \cite{BCDDFP20} and \cite{CKKLP20} apply directly to amply regular graphs with girth $4$.

Our main result is the following Lin-Lu-Yau curvature formula or estimates for amply regular graphs with girth $3$ (i.e., $\alpha\geq 1$).

\begin{theorem}\label{Thm:main}
 Let $G=(V,E)$ be an amply regular graph with parameter $(n,d,\alpha,\beta)$.
\begin{itemize}
  \item [(i)]If $\alpha=1$ and $\alpha<\beta$, then we have for any $xy\in E$ that
\[\kappa(x,y)=\frac{3}{d};\]
  \item [(ii)] If $\alpha\geq 1$ and $\alpha=\beta-1$, then we have for any $xy\in E$ that \[\kappa(x,y)\geq \frac{2}{d};\]
  \item [(iii)] If $\alpha=\beta>1$, then we have for any $xy\in E$ that \[\kappa(x,y)\geq \frac{2}{d}.\]
\end{itemize}
\end{theorem}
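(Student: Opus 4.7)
\emph{Reduction.} First I use \eqref{eq:idle} to rewrite the claim in terms of Wasserstein distance: $\kappa(x,y)=\frac{d+1}{d}\bigl(1-W_1(\mu_x^{1/(d+1)},\mu_y^{1/(d+1)})\bigr)$. With $p=1/(d+1)$ the measures $\mu_x^p$ and $\mu_y^p$ are uniform of value $\tfrac{1}{d+1}$ on their supports $\{x\}\cup\Gamma(x)$ and $\{y\}\cup\Gamma(y)$, so the bound $\kappa(x,y)\geq c/d$ amounts to exhibiting a transport plan of total cost at most $(d+1-c)/(d+1)$ between these two uniform measures on $(d+1)$-element sets. Note that Theorem~\ref{Thm:upperT}, specialized to $|\Delta_{xy}|=\alpha=1$, already supplies $\kappa(x,y)\leq 3/d$ in case (i), so only the matching-style lower bounds remain.

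\emph{The transport plan.} Decompose $\Gamma(x)=\{y\}\cup\Delta_{xy}\cup N_x$ with $N_x:=\Gamma(x)\setminus(\{y\}\cup\Delta_{xy})$, and analogously for $\Gamma(y)$, so that $|N_x|=|N_y|=d-1-\alpha$. Leave the mass on the $\alpha+2$ common vertices $\{x,y\}\cup\Delta_{xy}$ in place (cost $0$), and transport the uniform measure $\tfrac{1}{d+1}\mathbf{1}_{N_x}$ to $\tfrac{1}{d+1}\mathbf{1}_{N_y}$ via a bijection $\phi:N_x\to N_y$, at cost $\tfrac{1}{d+1}\sum_{u\in N_x} d(u,\phi(u))$. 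A brief arithmetic computation then shows: the sharp value $\kappa(x,y)=3/d$ in case (i) follows as soon as one has a \emph{perfect} matching of $\phi$ along $G$-edges (every $d(u,\phi(u))=1$), while $\kappa(x,y)\geq 2/d$ in cases (ii) and (iii) only requires a bijection whose ``excess'' $\sum_u(d(u,\phi(u))-1)$ is at most $\alpha$.

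\emph{Hall's marriage theorem.} These matchings are produced by Hall's theorem applied to the bipartite graph $H$ on $(N_x,N_y)$ whose edges record adjacency in $G$. The two key local estimates are: for each $u\in N_x$, the $\beta$-condition at the distance-$2$ pair $(u,y)$ gives $|\Gamma(u)\cap\Gamma(y)|=\beta$, so subtracting $x$ and $|\Gamma(u)\cap\Delta_{xy}|\leq\alpha$ leaves at least $\beta-1-\alpha$ neighbors of $u$ in $N_y$; for each $v\in N_y$, the symmetric count bounds $|\Gamma(v)\cap N_x|\leq\beta-1$. A double counting of $H$-edges between an arbitrary $S\subseteq N_x$ and its $H$-neighborhood, specialized to the parameter constraint of each case ($\alpha=1<\beta$, $\alpha=\beta-1$, or $\alpha=\beta>1$), produces the required Hall inequality $|\Gamma_H(S)|\geq|S|$ (perfect matching, case (i)) or its defect version $|\Gamma_H(S)|\geq|S|-\alpha$ (near-matching of deficiency $\leq\alpha$, cases (ii) and (iii)). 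The main technical obstacle is case (iii), where $\alpha=\beta$ drives the lower bound $\beta-1-\alpha$ negative and the counting must be supplemented by the $\alpha$-condition applied at the edges $xu$ and $yv$ to recover enough edges of $H$; one also needs to certify that the remaining unmatched pairs $(u,\phi(u))$ actually lie at distance $2$ in $G$ (via a shared neighbor in $\Delta_{xy}$) rather than distance $3$, which would spoil the estimate. The three parameter regimes of Theorem~\ref{Thm:main} are precisely the thresholds at which this double counting closes.
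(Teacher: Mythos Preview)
Your treatment of case~(i) is correct and coincides with the paper's argument: when $\alpha=1$ the unique triangle vertex $x_0$ cannot be adjacent to anything in $N_x\cup N_y$ (otherwise the edge $xx_0$ would have at least two common neighbours), so every $u\in N_x$ has exactly $\beta-1$ neighbours in $N_y$ and symmetrically; Hall then yields a perfect matching and the formula follows together with Theorem~\ref{Thm:upperT}.

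For cases~(ii) and~(iii) your route diverges from the paper's and carries a genuine gap. You propose to keep the mass on $\Delta_{xy}$ in place and to find a bijection $\phi\colon N_x\to N_y$ with at most $\alpha$ non-adjacent pairs, each of which you then need at distance~$2$. The defect Hall inequality $|\Gamma_H(S)|\ge|S|-\alpha$ does hold (your double count goes through once one also bounds $\sum_{u\in S}|\Gamma(u)\cap\Delta_{xy}|\le\alpha(\alpha-1)$ via the $\alpha$-condition on the edges $xz_i$). But nothing in the output of a defect-Hall matching controls \emph{which} vertices are left unmatched, and you offer no mechanism to pair the leftover $u$'s and $v$'s so that they share a common neighbour. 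Knowing that an unmatched $u$ has \emph{some} neighbour in $\Delta_{xy}$ does not give it a common neighbour with any \emph{particular} unmatched $v$; if even one such pair ends up at distance~$3$, the excess jumps by $2$ and the bound can fail whenever the deficiency exceeds $\alpha/2$. You yourself flag this as ``the main technical obstacle'' but do not resolve it.

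The paper avoids the obstacle rather than confronting it. Instead of freezing $\Delta_{xy}$, it introduces a formal copy $\Delta_{xy}'=\{z_1',\dots,z_\alpha'\}$ and runs Hall on the enlarged bipartite graph with sides $N_x\cup\Delta_{xy}$ and $N_y\cup\Delta_{xy}'$, where $z_i'$ inherits the $G$-adjacencies of $z_i$ to $N_x$ and to $\Delta_{xy}$, and in case~(ii) one also adds the identity edges $z_iz_i'$. The same parameter arithmetic now shows this bipartite graph is \emph{regular} of degree $\alpha$ (case~(ii)) or $\alpha-1$ (case~(iii)), so the full Hall condition holds and one gets a \emph{perfect} matching. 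Reading a match $u\leftrightarrow z_j'$ as ``send the $\mu_x$-mass at $u$ to $z_j$'' and $z_i\leftrightarrow w$ as ``send the $\mu_x$-mass at $z_i$ to $w$'' gives a transport plan in which every unit of mass moves distance at most~$1$; the distance-$2$ certification you were worried about never arises. In effect, the doubling of $\Delta_{xy}$ is exactly the device that converts your hoped-for ``routing through $\Delta_{xy}$'' into an honest perfect matching.
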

\begin{remark}
 \begin{itemize}
   \item [(1)] Consider the $9$-Paley graph which is strongly regular with parameter $(9,4,1,2)$. It fulfills the parameter restrictions in Theorem \ref{Thm:main} (i) and (ii). We can check directly the Lin-Lu-Yau curvature of the $9$-Paley graph is $\frac{3}{4}$.
   \item [(2)] Consider the Shrikhande graph which is strongly regular with parameter $(16,6,2,2)$. It fulfills the parameter restriction in Theorem \ref{Thm:main} (iii). We can check directly the Lin-Lu-Yau curvature of the Shrikhande graph is $\frac{1}{3}$. Therefore, the estimate in Theorem \ref{Thm:main} (iii) is sharp.
   \item [(3)] Bonini et al. \cite[Conjecture 1.7]{BCDDFP20} conjectured that the Lin-Lu-Yau curvature of any strongly regular conference graphs with parameter $(4\gamma+1,2\gamma,\gamma-1,\gamma)$ with $\gamma\geq 2$ satisfies
\[\kappa(x,y)=\frac{1}{2}+\frac{1}{2\gamma}, \,\,\text{for all}\,\,xy\in E.\]
Our Theorem \ref{Thm:main} (ii) implies for such conference graphs that
\[\kappa(x,y)\geq \frac{1}{\gamma}, \,\,\text{for all}\,\,xy\in E.\]
\item [(4)] In general, it is still open whether the Lin-Lu-Yau curvature of an amply regular graph of parameter $(n,d,\alpha,\beta)$ with girth $3$ (i.e., $\alpha\geq 1$) and $\beta\geq 2$ is always nonnegative or not.
 \end{itemize}
\end{remark}

For Ollivier's Ricci curvature, a Bonnet-Myers type diameter estimate holds true \cite{Ollivier09}: Uniformly positive curvature lower bound implies the finiteness of the diameter. This has been extended to Lin-Lu-Yau curvature.

\begin{theorem}[Discrete Bonnet-Myers Theorem \cite{LLY11}]\label{Thm:DBMT} Let $G=(V,E)$ be a locally finite connected graph. Suppose $\kappa(x,y)\geq k>0$ holds true for any $xy\in E$. Then the diameter
\[\mathrm{diam}(G)\leq \frac{2}{k}.\]
\end{theorem}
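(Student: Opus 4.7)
My plan is to follow a standard Bonnet-Myers type argument in three stages: first extend the curvature bound from edges to arbitrary vertex pairs, then produce a matching upper bound on $\kappa(x,y)$ in terms of $d(x,y)$, and finally combine the two.

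\textbf{Step 1 (Extension along geodesics).} Given any $x,z\in V$ with $d(x,z)=L$, pick a geodesic $x=x_0,x_1,\dots,x_L=z$. The Wasserstein distance $W_1$ satisfies the triangle inequality, so
\[
W_1(\mu_x^p,\mu_z^p)\;\le\;\sum_{i=0}^{L-1}W_1(\mu_{x_i}^p,\mu_{x_{i+1}}^p)\;\le\;\sum_{i=0}^{L-1}\bigl(1-\kappa_p(x_i,x_{i+1})\bigr),
\]
using $d(x_i,x_{i+1})=1$ and the definition of $\kappa_p$. Dividing by $L=d(x,z)$ gives $\kappa_p(x,z)\ge\frac{1}{L}\sum_i\kappa_p(x_i,x_{i+1})\ge\min_i\kappa_p(x_i,x_{i+1})$. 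Dividing by $1-p$ and sending $p\to1$ yields $\kappa(x,z)\ge\min_i\kappa(x_i,x_{i+1})\ge k$.

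\textbf{Step 2 (Upper bound via Kantorovich duality).} For $x,z\in V$ at maximum distance $D=\mathrm{diam}(G)$, I would test the transport problem against the $1$-Lipschitz function $f(u)=d(u,z)$. Every neighbor of $x$ lies at distance at least $D-1$ from $z$ by the triangle inequality, while every neighbor of $z$ has $f$-value exactly $1$, so
\[
\int f\,d\mu_x^p-\int f\,d\mu_z^p\;\ge\;\bigl(pD+(1-p)(D-1)\bigr)-(1-p)\;=\;D-2(1-p).
\]
Kantorovich duality (or directly, any transport plan respects Lipschitz test functions) then gives $W_1(\mu_x^p,\mu_z^p)\ge D-2(1-p)$, whence
\[
\kappa_p(x,z)=1-\frac{W_1(\mu_x^p,\mu_z^p)}{D}\le\frac{2(1-p)}{D}.
\]
Dividing by $1-p$ and passing to the limit yields $\kappa(x,z)\le\frac{2}{D}$.

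\textbf{Step 3 (Conclusion).} Combining Steps 1 and 2 for this diametrical pair gives $k\le\kappa(x,z)\le\frac{2}{D}$, i.e.\ $D\le\frac{2}{k}$, as required.

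The only subtle point is the interchange of limit and inequality in Step 1: one must confirm that the pointwise bound $\kappa_p(x,z)/(1-p)\ge\min_i\kappa_p(x_i,x_{i+1})/(1-p)$ survives the passage to $p\to1$. This follows since the limit defining $\kappa$ exists (as shown in \cite{LLY11,BCLMP18}), so the inequality persists in the limit. Beyond that, the argument is routine: Step 2 is really just the observation that to move the $\mu_x^p$-mass onto the support of $\mu_z^p$ one pays at least $D-1$ per unit of mass that starts outside $z$, minus the credit of $1$ per unit of $\mu_z^p$-mass that sits one step away from $z$.
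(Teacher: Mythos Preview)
Your argument is correct and is essentially the standard proof from \cite{LLY11}; note that the present paper does not give its own proof of this theorem but merely quotes it from that reference, so there is nothing further to compare.
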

Discrete Bonnet-Myers theorem has recently found important applications in coding theory: it provides a completely elementary way to derive bounds on locally correctable and some locally testable binary linear codes \cite{IS20}.

Applying Theorem \ref{Thm:DBMT}, we have the following consequences.
\begin{corollary}\label{coro:diam}
  Let $G=(V,E)$ be an amply regular graph with parameter $(n,d,\alpha,\beta)$.
\begin{itemize}
\item [(i)] If $G$ has girth $4$, then \[\mathrm{diam}(G)\leq d;\]
  \item [(ii)]If $\alpha=1$ and $\alpha<\beta$,  then \[\mathrm{diam}(G)\leq \frac{2d}{3};\]
  \item [(iii)] If $\alpha\geq 1$ and $\alpha=\beta-1$,  then \[\mathrm{diam}(G)\leq d;\]
  \item [(iv)] If $\alpha=\beta>1$, then \[\mathrm{diam}(G)\leq d.\]
\end{itemize}
\end{corollary}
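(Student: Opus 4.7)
The plan is to treat this as a direct application of the Discrete Bonnet-Myers Theorem (Theorem \ref{Thm:DBMT}), feeding it the uniform edge-curvature lower bounds already established in the theorems stated earlier in this section. Since the curvature formulas/estimates above are asserted for every edge $xy\in E$ of the respective amply regular graph, the hypothesis of Theorem \ref{Thm:DBMT} is met case by case, and the diameter bound $\mathrm{diam}(G)\le 2/k$ is exactly $2/k$ evaluated at the corresponding $k$.

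Concretely, I would handle the four cases in sequence. For case (i), the girth $4$ formula gives $\kappa(x,y)=2/d$ on every edge, hence $\mathrm{diam}(G)\le 2/(2/d)=d$. For case (ii), the hypotheses $\alpha=1$ and $\alpha<\beta$ put us in the setting of Theorem \ref{Thm:main}(i), which yields $\kappa(x,y)=3/d$ on every edge, so $\mathrm{diam}(G)\le 2/(3/d)=2d/3$. For cases (iii) and (iv), Theorem \ref{Thm:main}(ii) and (iii) respectively give $\kappa(x,y)\ge 2/d$ uniformly, and Theorem \ref{Thm:DBMT} then produces $\mathrm{diam}(G)\le d$ in both cases.

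The only mild caveat to flag is that the hypothesis of Theorem \ref{Thm:DBMT} demands a \emph{uniform} positive lower bound over all edges; this is exactly what the curvature statements above deliver (the formulas and estimates are quantified over every $xy\in E$), so there is no obstacle here. I see no genuine difficulty in the argument: the entire corollary is a bookkeeping exercise in substituting the four curvature lower bounds into the discrete Bonnet-Myers inequality.
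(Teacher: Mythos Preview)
Your proposal is correct and matches the paper's approach exactly: the corollary is stated immediately after Theorem \ref{Thm:DBMT} as a direct consequence, and the paper does not even spell out a separate proof beyond the phrase ``Applying Theorem \ref{Thm:DBMT}, we have the following consequences.'' Your case-by-case substitution of the curvature lower bounds into $\mathrm{diam}(G)\leq 2/k$ is precisely what is intended.
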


The following diameter estimate for amply regular graphs is known via classical combinatorial methods, see, e.g., \cite[Theorem 1.13.2]{BCN89}.
\begin{theorem}\cite{BCN89}\label{thm:BCN}
Let $G=(V,E)$ be an amply regular graph with parameter $(n,d,\alpha,\beta)$. If $\alpha\leq \beta\neq 1$, then
 \[\mathrm{diam}(G)\leq d,\]
where the equality holds if and only if $G$ is a $d$-hypercube graph.
\end{theorem}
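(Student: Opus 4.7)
The plan is a distance-layer analysis from a base vertex $x_0 \in V$. Let $D_i := \{v \in V : d(x_0, v) = i\}$, and for $y \in D_i$ with $i \geq 1$ let
\[
c_i(y) := |N(y) \cap D_{i-1}|, \qquad c_i := \min_{y \in D_i} c_i(y).
\]
We have $1 \leq c_i(y) \leq d$ whenever $D_i \neq \emptyset$, so it suffices to prove that $c_1 < c_2 < \cdots$ on the support of the sequence: the largest $i$ with $D_i \neq \emptyset$ will then satisfy $c_i \geq i$, forcing $\mathrm{diam}(G) \leq d$.

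The base step is immediate: $c_1 = 1$, and whenever $D_2 \neq \emptyset$ amply regularity gives $c_2(y) = \beta$ for every $y \in D_2$. Since $\beta \geq 1$ as soon as $D_2 \neq \emptyset$, the hypothesis $\beta \neq 1$ promotes this to $\beta \geq 2 > c_1$. For the inductive step, fix $i \geq 3$, $y \in D_i$, and $z \in N(y) \cap D_{i-1}$, and set $A := N(z) \cap D_{i-2}$ and $B := N(y) \cap D_{i-1}$. The claim is $c_i(y) \geq c_{i-1}(z) + 1$. Each $w \in A$ satisfies $d(w, y) = 2$ and hence has exactly $\beta$ common neighbors with $y$; the layer inclusions $N(w) \subseteq D_{i-3} \cup D_{i-2} \cup D_{i-1}$ and $N(y) \subseteq D_{i-1} \cup D_i \cup D_{i+1}$ force all of these common neighbors to lie in $D_{i-1}$, hence in $B$, with $z$ always among them. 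Double-counting the edges between $A$ and $B \setminus \{z\}$ then gives
\[
(\beta - 1)|A| \;=\; \sum_{b \in B \setminus \{z\}} |N(b) \cap A|.
\]
For $b \in B \setminus \{z\}$, the vertex $y$ is a common neighbor of $b$ and $z$ but does not lie in $D_{i-2}$, so $|N(b) \cap A| \leq \alpha - 1$ when $b \sim z$ and $|N(b) \cap A| \leq \beta - 1$ when $d(b, z) = 2$. Writing $\alpha_-$ for the number of $b \in B \setminus \{z\}$ adjacent to $z$, the right-hand side is at most $(c_i(y) - 1)(\beta - 1) + \alpha_-(\alpha - \beta)$, which is at most $(c_i(y) - 1)(\beta - 1)$ by the hypothesis $\alpha \leq \beta$. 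Dividing by $\beta - 1 \geq 1$, where $\beta \neq 1$ is used crucially, yields the claim.

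For the equality case $\mathrm{diam}(G) = d$, every inequality above must be tight up to index $d$; tracing the saturations forces $\alpha = 0$, $\beta = 2$, $c_i(y) = i$ for every $y \in D_i$, and $|D_i| = \binom{d}{i}$, and a standard recognition theorem then identifies $G$ with the $d$-hypercube $Q_d$. The main obstacle is the inductive double-count: one must split $B \setminus \{z\}$ into its adjacent-to-$z$ and distance-$2$-from-$z$ parts to invoke the correct amply regular parameter, and must exploit that $y$ is an omnipresent common neighbor of $b$ and $z$ outside $A$ to extract the ``$-1$'' that drives the strict growth. I expect the equality case to need the most care, as it relies on invoking a hypercube characterization from the combinatorial literature.
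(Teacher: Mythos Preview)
The paper does not itself prove this theorem: it is quoted from \cite[Theorem~1.13.2]{BCN89} as a known result obtained ``via classical combinatorial methods''. The paper's own contribution toward it is Corollary~\ref{coro:diam}, an alternative \emph{geometric} argument that bounds the Lin--Lu--Yau curvature from below (Theorem~\ref{Thm:main}) and then invokes the discrete Bonnet--Myers theorem (Theorem~\ref{Thm:DBMT}); this recovers $\mathrm{diam}(G)\leq d$ only for girth $4$ and for the girth-$3$ parameter ranges $\alpha=1<\beta$, $\alpha=\beta-1\geq 1$, and $\alpha=\beta>1$, and does not address the hypercube rigidity.

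Your proposal is essentially the classical distance-layer proof from \cite{BCN89}, and the inequality part is correct: each $b\in B\setminus\{z\}$ is at distance $1$ or $2$ from $z$ since both are neighbours of $y$, so the amply regular parameters apply; your double count then gives $(\beta-1)c_{i-1}(z)\leq (\beta-1)(c_i(y)-1)$, and $\beta\geq 2$ lets you divide. Compared with the paper's curvature route, your argument is more general (it handles all $\alpha\leq\beta$, $\beta\neq 1$ uniformly) and self-contained, whereas the paper's method yields Ricci-curvature information as a by-product but currently covers only special parameter slices. Your equality analysis, however, is only a sketch: deducing $\alpha=0$, $\beta=2$, $c_i(y)\equiv i$ for \emph{every} $y\in D_i$ (not just the minimisers), $|D_i|=\binom{d}{i}$, and then identifying $G$ with $Q_d$ requires further argument or an explicit appeal to a characterisation theorem, so if you want the full statement you should fill that in.
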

\begin{remark}
 Corollary \ref{coro:diam} provides a geometric proof via Lin-Lu-Yau curvature for Theorem \ref{thm:BCN} in the case of girth $4$ and some special cases of girth $3$. Moreover, we improve the estimate in Theorem \ref{thm:BCN} under the condition of Corollary \ref{coro:diam} (ii).
\end{remark}

%%%%%%%%%%%%%%%%%%%%%%%%%%%%%%%%%%%%%%%%%%%%%%%%%%%%%%
\section{Preliminaries}
%%%%%%%%%%%%%%%%%%%%%%%%%%%%%%%%%%%%%%%%%%%%%%%%%%%%%%
We first recall the important concept of matching from graph theory.
\begin{definition}\cite[Section 16.1]{BM08}
Let $G=(V,E)$ be a locally finite simple connected graph. A set $M$ of pairwise nonadjacent edges is called a \emph{matching}. The two vertices of each edge of $M$ are said to be \emph{matched} under $M$, and each vertex adjacent to an edge of $M$ is said to be \emph{covered} by $M$. A matching $M$ is called a \emph{perfect matching} if it covers every vertex of the graph.
\end{definition}
The following Hall's marriage theorem will be an important tool for our purpose.
\begin{lemma}\cite[Theorem 16.4]{BM08}\label{lemma:Hall}
Let $H=(V,E)$ be a bipartite graph with the bipartition $V=S\sqcup T$. Then $H$ has a matching which covers every vertex in $S$ if and only if
\[|\Gamma_T(W)|\geq |W| \,\,\text{for all}\,\,W\subseteq S\]
holds, where $\Gamma_T(W):=\{v\in T |\,\,\text{there exists }\,\, w\in W \,\,\text{such that}\,\, vw\in E\}$.
 \end{lemma}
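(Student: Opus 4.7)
The plan is to prove Hall's marriage theorem by induction on $|S|$, with the necessity being essentially a counting observation and the sufficiency requiring a case split depending on whether the Hall inequality is slack everywhere or tight somewhere.

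For the easy direction (necessity), I would argue that if $M$ is a matching covering $S$, then for any $W\subseteq S$ the map sending each $w\in W$ to its $M$-partner is an injection from $W$ into $\Gamma_T(W)$, yielding $|\Gamma_T(W)|\geq |W|$.

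For the sufficiency, I would induct on $|S|$. The base case $|S|=1$ is immediate: Hall's condition applied to $S$ itself supplies at least one neighbor in $T$ for the unique element of $S$. For the inductive step I would distinguish two cases. In Case A, every proper nonempty $W\subsetneq S$ satisfies the strict inequality $|\Gamma_T(W)|\geq |W|+1$. Then pick any $s\in S$, pick any $t\in \Gamma_T(\{s\})$, match them, and pass to $H'=H-\{s,t\}$ with $S'=S\setminus\{s\}$; for any $W'\subseteq S'$ the set $\Gamma_T(W')$ loses at most the single vertex $t$, so the residual neighborhood still has size at least $|W'|+1-1=|W'|$, and induction finishes this case. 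In Case B there is some proper nonempty $W^{\ast}\subsetneq S$ with $|\Gamma_T(W^{\ast})|=|W^{\ast}|$. Apply induction to the bipartite graph on $W^{\ast}\cup\Gamma_T(W^{\ast})$ (which inherits Hall's condition as a sub-instance) to match $W^{\ast}$ perfectly into $\Gamma_T(W^{\ast})$. Then consider the residual bipartite graph on $(S\setminus W^{\ast})\cup (T\setminus \Gamma_T(W^{\ast}))$, and verify Hall's condition there: for any $W'\subseteq S\setminus W^{\ast}$, writing $\Gamma'$ for the neighborhood in the residual graph,
\[
|\Gamma_T(W^{\ast}\cup W')| = |\Gamma_T(W^{\ast})|+|\Gamma'(W')|=|W^{\ast}|+|\Gamma'(W')|,
\]
while Hall's condition applied in $H$ to $W^{\ast}\cup W'$ gives $|\Gamma_T(W^{\ast}\cup W')|\geq |W^{\ast}|+|W'|$, forcing $|\Gamma'(W')|\geq |W'|$. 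Another invocation of the inductive hypothesis then matches $S\setminus W^{\ast}$ into $T\setminus\Gamma_T(W^{\ast})$, and the union of the two matchings covers $S$.

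The main obstacle is the verification in Case B that Hall's condition descends to the residual bipartite graph after deleting $W^{\ast}\cup\Gamma_T(W^{\ast})$; the rest of the argument is essentially bookkeeping around the induction. The delicate point is that one must use the tightness $|\Gamma_T(W^{\ast})|=|W^{\ast}|$ together with the fact that $\Gamma_T(W^{\ast})$ and $\Gamma'(W')$ are disjoint by construction, so as to split the count of $\Gamma_T(W^{\ast}\cup W')$ additively without overcounting. Once that submodular-style identity is in place, the two applications of the induction hypothesis glue into the desired matching.
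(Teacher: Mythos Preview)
Your argument is correct: it is the classical inductive proof of Hall's theorem, and the Case~B verification that Hall's condition passes to the residual bipartite graph is handled properly via the identity $|\Gamma_T(W^{\ast}\cup W')|=|\Gamma_T(W^{\ast})|+|\Gamma'(W')|$ together with the tightness of $W^{\ast}$.

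There is nothing to compare against, however: the paper does not supply its own proof of this lemma. It is stated as Lemma~\ref{lemma:Hall} with a citation to \cite[Theorem 16.4]{BM08} and is used purely as a black-box tool in the proofs of Theorem~\ref{Thm:main}(i)--(iii). So your proposal goes beyond what the paper does; any correct proof of Hall's theorem (yours, or the alternating-path/K\"onig--Egerv\'ary approach) would serve equally well here.
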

For any $xy\in E$, the Lin-Lu-Yau curvature $\kappa(x,y)$ only depends on the subgraph induced by vertices with distance less than or equal to $2$ to $x$ and $y$ \cite[Lemma 2.3]{BM15}. For convenience, we introduce the following notation of the core neighborhood of $xy\in E$:
\[C_{xy}=\{x\}\cup\{y\}\cup\Delta_{xy}\cup N_x\cup N_y\cup P_{xy},\]
where $N_x=\Gamma(x)\setminus(\{y\}\cup\Gamma(y))$, $N_y=\Gamma(y)\setminus(\{x\}\cup\Gamma(x))$, $P_{xy}=\{z\in V | d(x,z)=2, d(y,z)=2\}$.

%%%%%%%%%%%%%%%%%%%%%%%%%%%%%%%%%%%%%%%%%%%%%%%%%%%%%%
\section{Proof of Theorem \ref{Thm:main}}
%%%%%%%%%%%%%%%%%%%%%%%%%%%%%%%%%%%%%%%%%%%%%%%%%%%%%%
In this section, we prove our main Theorem \ref{Thm:main}.
\begin{proof}[Proof of Theorem \ref{Thm:main}(i)]
We consider the core neighborhood decomposition of an edge $xy\in E$, that is,  $$\Gamma(x)=\{y\}\cup \Delta_{xy}\cup N_x,\,\,\Gamma(y)=\{x\}\cup \Delta_{xy}\cup N_y.$$
Since $\alpha=1$, we can denote $\Delta_{xy}=\{x_0\}$, and there are no edges connecting $x_0$ and any vertex in $N_x$ or $N_y$. We are going to show the existence of a perfect matching between $N_x$ and $N_y$ via applying Lemma \ref{lemma:Hall}.

Let $H$ be the bipartite subgraph with vertex set $V_H:=N_x\sqcup N_y$ and edge set $E_H:=\{vw\in E|v\in N_x, w\in N_y\}$. Take a subset $A\subset N_x$, let $B:=\Gamma_{N_y}(A)$ be the set of neighbors of $A$ in $N_y$. Observe that for any vertex $x_i\in A$, we have $d(x_i,y)=2$. Therefore, there are $\beta$ common neighbors of $x_i$ and $y$. Since $x$ is a common neighbor of $x_i$ and $y$, there are $\beta-1$ neighbors of $x_i$ in $N_y$. Similarly, for any vertex $y_i\in B\subset N_y$, there are $\beta-1$ neighbors of $y_i$ in $N_x$.
Denote by $E(A,B):=\{xy\in E | x\in A, y\in B\}$. We then have
\[\sum_{v\in A}(\beta-1)=|E(A,B)|\leq \sum_{w\in B}(\beta-1).\]
Since $\beta>1$, we derive from above that \[|A|\leq |B|=|\Gamma_{N_y}(A)|.\]
Applying Lemma \ref{lemma:Hall}, there is a perfect matching $M$ between $N_x$ and $N_y$. We construct the following transport plan building upon such a perfect matching:
\[\pi(v,w):=\left\{
              \begin{array}{ll}
                0, & \hbox{$v=x,w=y$;} \\
                \frac{1}{d+1}, & \hbox{$v=w\in \Delta_{xy}\cup\{x\}\cup\{y\}$;} \\
                \frac{1}{d+1}, & \hbox{$vw\in M$;} \\
                0, & \hbox{otherwise.}
              \end{array}
            \right.
\]
Noticing that $|N_x|=|N_y|=d-\alpha-1$, we calculate the Wasserstein distance
\[W_1(\mu_x^{\frac{1}{d+1}},\mu_y^{\frac{1}{d+1}})\leq\sum_{v\in V}\sum_{w\in V}d(v,w)\pi(v,w)=\frac{|N_x|}{d+1}=\frac{d-\alpha-1}{d+1}=\frac{d-2}{d+1}.\]
Applying (\ref{eq:idle}), we have the Lin-Lu-Yau curvature
\[\kappa(x,y)=\frac{d+1}{d}\kappa_{\frac{1}{d+1}}(x,y)=\frac{d+1}{d}(1-W_1(\mu_x^{\frac{1}{d+1}},\mu_y^{\frac{1}{d+1}}))\geq\frac{3}{d}.\]
Combining with Theorem \ref{Thm:upperT}, we obtain
\[\kappa(x,y)=\frac{3}{d}.\]
\end{proof}
\begin{proof}[Proof of Theorem \ref{Thm:main}(ii)]
We construct a bipartite graph $H$ from the core neighborhood of $xy\in E$ as follows. Let us denote
\[\Delta_{xy}=\{z_1,\ldots,z_\alpha\}.\]
We add a new set of vertices $\Delta'_{xy}:=\{z_1',\ldots,z_\alpha'\}$ which is considered as a copy of $\Delta_{xy}$. Let $H$ be the bipartite graph with vertex set
\[V_H=N_x\cup\Delta_{xy}\cup\Delta_{xy}'\cup N_y\]
and edge set
\[E_H=E_1\cup E_2\cup E_3\cup E_4\cup E_5,\]
where
\begin{align*}
 E_1=&\{vw | v\in N_x, w\in N_y, vw\in E\},\\
E_2=&\{vz_i' | v\in N_x, z_i'\in \Delta_{xy}', vz_i\in E\},\\
E_3=&\{z_iw | z_i\in \Delta_{xy}, w\in N_y, z_iw\in E\},\\
E_4=&\{z_iz_i' | i=1,\ldots,\alpha\},\\
E_5=&\{z_iz_j' | z_iz_j\in E, i\neq j, 1\leq i,j\leq \alpha\}.
\end{align*}
Notice that in the above construction, edges only exist between $N_x\cup \Delta_{xy}$ and $N_y\cup \Delta_{xy}'$. We will show that $H$ has a perfect matching by Lemma \ref{lemma:Hall}.

Take a subset $A\subset N_x\cup \Delta_{xy}$. Let $B=\Gamma_{N_y\cup \Delta_{xy}'}(A)$ be the neighbors of $A$ in $N_y\cup\Delta_{xy}'$.
\begin{itemize}
  \item The case $A\subset N_x$. Similarly as in the proof of Theorem \ref{Thm:main} (i), there are $\beta-1\geq 1$ neighbors of any $v\in A$ in $N_y\cup \Delta_{xy}'$, and $\beta-1\geq 1$ neighbors of any $w\in B\cap N_y$ in $N_x\cup \Delta_{xy}$. Consider any vertex $z_i'\in B\cap \Delta_{xy}'$. The corresponding vertex $z_i\in \Delta_{xy}$ and $x$ have $\alpha$ common neighbors in $G$ including the vertex $y$. Since there is a new additional edge between $z_i$ and $z_i'$ in $H$, the vertex $z_i'$ has exactly $\alpha$ neighbors in $N_x\cup \Delta_{xy}$ in $H$.
Therefore, we derive
\[\sum_{v\in A}(\beta-1)=|E(A,B)|\leq \sum_{w\in B\cap N_y}(\beta-1)+\sum_{w\in B\cap\Delta_{xy}'}\alpha.\]
By assumption, we have $\alpha=\beta-1$. Hence the above estimate yields
\[|A|\leq |B|=|\Gamma_{N_y\cup\Delta_{xy}'}(A)|.\]
  \item The case $A\subset \Delta_{xy}$.  Consider any vertex $z_i\in A$. The vertices $z_i$ and $y$ has $\alpha$ common neighbors in $G$ including the vertex $x$. Since there is a new additional edge between $z_i$ and $z_i'\in \Delta_{xy}'$ in $H$, the vertex $z_i$ has exactly $\alpha$ neighbors in $N_y\cup \Delta_{xy}'$ in $H$. Therefore, we derive
\[\sum_{v\in A}\alpha=|E(A,B)|\leq\sum_{w\in B\cap N_y}(\beta-1)+\sum_{w\in B\cap \Delta_{xy}'}\alpha.\]
Using $\alpha=\beta-1$, we obtain
\[|A|\leq |B|=|\Gamma_{N_y\cup\Delta_{xy}'}(A)|.\]
  \item The case $A\subset N_x\cup \Delta_{xy}$. We have
\[\sum_{v\in A\cap N_x}(\beta-1)+\sum_{v\in A\cap \Delta_{xy}}\alpha=|E(A,B)|\leq \sum_{w\in B\cap N_y}(\beta-1)+\sum_{w\in B\cap \Delta_{xy}'}\alpha.\]
By $\alpha=\beta-1$, we obtain
\[|A|\leq |B|=|\Gamma_{N_y\cup\Delta_{xy}'}(A)|.\]
\end{itemize}
In conclusion, we have $|A|\leq |B|=|\Gamma_{N_y\cup\Delta_{xy}'}(A)|$ for any subset $A\subset N_x\cup \Delta_{xy}$. Then, Lemma \ref{lemma:Hall} implies the existence of a perfect matching of $H$. Therefore, there exists a transport plan between $\mu_x^{\frac{1}{d+1}}$ and $\mu_y^{\frac{1}{d+1}}$ in which the mass at any vertex in $N_x\cup \Delta_{xy}$ is moved by a distance at most $1$. And the mass at $\{x\}$ and $\{y\}$ stay put. Using such a transport plan, we estimate the Wasserstein distance
\[W_1(\mu_x^{\frac{1}{d+1}},\mu_y^{\frac{1}{d+1}})\leq\sum_{v\in V}\sum_{w\in V}d(v,w)\pi(v,w)\leq \frac{d-1}{d+1}.\]
Applying (\ref{eq:idle}), we have the Lin-Lu-Yau curvature
\[\kappa(x,y)=\frac{d+1}{d}\kappa_{\frac{1}{d+1}}(x,y)=\frac{d+1}{d}(1-W_1(\mu_x^{\frac{1}{d+1}},\mu_y^{\frac{1}{d+1}}))\geq\frac{2}{d}.\]
\end{proof}
\begin{proof}[Proof of Theorem \ref{Thm:main}(iii)] We modify the construction of the bipartite graph $H$ in the proof of Theorem \ref{Thm:main} (ii) by dropping the edge set $E_4$. That is, $H$ is the graph with vertex set $V_H=N_x\cup \Delta_{xy}\cup\Delta_{xy}'\cup N_y$ and edge set $E_H=E_1\cup E_2\cup E_3\cup E_5$.

Take a subset $A\subset N_x\cup \Delta_{xy}$. Let $B=\Gamma_{N_y\cup \Delta_{xy}'}(A)$ be the neighbors of $A$ in $N_y\cup\Delta_{xy}'$. Similarly as the analysis in the proof of Theorem \ref{Thm:main} (ii), we have
\[\sum_{v\in A\cap N_x}(\beta-1)+\sum_{v\in A\cap \Delta_{xy}}(\alpha-1)=|E(A,B)|\leq \sum_{w\in B\cap N_y}(\beta-1)+\sum_{w\in B\cap \Delta_{xy}'}(\alpha-1).\]
By assumption, we have $\alpha=\beta>1$. Then we derive from above that
\[|A|\leq |B|=|\Gamma_{N_y\cup\Delta_{xy}'}(A)|.\]
Therefore, there is a perfect matching of $H$ by Lemma \ref{lemma:Hall}. Similarly as in the proof of Theorem \ref{Thm:main} (ii), we derive
\[\kappa(x,y)\geq \frac{2}{d}.\]
\end{proof}
\section*{Acknowledgement}
We are very grateful to Shuliang Bai for pointing out that amply regular graphs of parameter $(n,d,\alpha,\beta)$ with $\beta=1$ can have girth $3$ and negative Lin-Lu-Yau curvature.
We would like to thank the anonymous referees for suggestions that helped to greatly improved the quality of this paper.
This work is supported by the National Natural Science Foundation of China (No. 12031017).


\begin{thebibliography}{99}
\bibitem{CY96} F. R. K. Chung, S.-T. Yau, Logarithmic Harnack inequalities, Math. Res. Lett. 3 (1996), no. 6, 793-812.
\bibitem{Ollivier09} Y. Ollivier, Ricci curvature of Markov chains on metric spaces, J. Funct. Anal. 256 (2009), no. 3, 810-864.
\bibitem{Ollivier10} Y. Ollivier, A survey of Ricci curvature for metric spaces and Markov chains, Probabilistic Approach to Geometry, Adv. Stud. Pur Math., M. Kotani, M. Hino, T. Kumagai (eds.), Tokyo, Math. Soc. Japan, 2010, 343-381.
\bibitem{BJL12} F. Bauer, J. Jost, S. Liu, Ollivier-Ricci curvature and the spectrum of the normalized graph Laplace operator, Math. Res. Lett. 19 (2012), no. 6, 1185-1205.
\bibitem{JL14} J. Jost, S. Liu, Ollivier's Ricci curvature, local clustering and curvature-dimansioin inequalities on graphs, Discrete Comput. Goem. 51 (2014), no. 2, 300-322.
\bibitem{BM15} B. B. Bhattacharya and S. Mukherjee, Exact and asymptotic results on coarse Ricci curvature of graphs, Discrete Math. 338 (2015), no. 1, 23-42.
\bibitem{LLY11} Y. Lin, L. Lu, S.-T. Yau,
Ricci curvature of graphs,
Tohoku Math. J. (2) 63 (2011), no. 4, 605-627.
\bibitem{LY10} Y. Lin, S.-T. Yau, Ricci curvature and eigenvalue estimate on locally finite graphs,
Math. Res. Lett. 17 (2010), no. 2, 343-356.
\bibitem{BCLMP18} D. Bourne, D. Cushing, S. Liu, F. M\"unch, N. Peyerimhoff, Ollivier-Ricci idleness functions of graphs, SIAM J. Discrete Math. 32 (2018), no. 2, 1408-1424.
\bibitem{CKKLMP20} D. Cushing, S. Kamtue, J. Koolen, S. Liu, F. M\"unch, N. Peyerimhoff, Rigidity of the Bonnet-Myers inequality for graphs with respect to Ollivier Ricci curvature, Adv. Math. 369 (2020), 107188.
\bibitem{BCDDFP20} V. Bonini, C. Carroll, U. Dinh, S. Dye, J. Frederick, E. Pearse, Condensed Ricci curvature of complete and strongly regular graphs, Involve, 13 (2020), no. 4, 559-576.
\bibitem{MW19} F. M\"unch, R. Wojciechowski, Ollivier Ricci curvature for general graph Laplacians: Heat equation, Laplacian comparison, non-explosion and diameter bounds, Adv. Math. 356 (2019), 106759.
\bibitem{CKKLP20} D. Cushing, S. Kamtue, R. Kangaslampi, S. Liu, N. Peyerimhoff, Curvatures, graph products and Ricci flatness, J. Graph Theory 96 (2021), no. 4, 522-553.
\bibitem{IS20} E. Iceland, A. Samorodnitsky, On coset leader graphs of structured linear codes, Discrete Comput. Geom. 63 (2020), no. 6, 560-576.
\bibitem{BCN89} A. E. Brouwer, A. M. Cohen, A. Neumaier, Distance-regular graphs, Springer Verlag, 1989.
\bibitem{BM08} J. A. Bondy, U. S. R. Murty, Graph theory, Springer Verlag, 2008.
\end{thebibliography}
\end{document}